\documentclass[oneside,reqno,english]{amsart}
\usepackage[T1]{fontenc}
\usepackage[utf8]{inputenc}
\usepackage{xcolor}
\usepackage{babel}
\usepackage{prettyref}
\usepackage{amstext}
\usepackage{amsthm}
\usepackage{amssymb}
\usepackage[pdfusetitle,
 bookmarks=true,bookmarksnumbered=false,bookmarksopen=false,
 breaklinks=false,pdfborder={0 0 0},pdfborderstyle={},backref=false,colorlinks=false]
 {hyperref}
\hypersetup{
 colorlinks=true,citecolor=blue,linkcolor=blue,linktocpage=true}

\makeatletter
\numberwithin{equation}{section}
\numberwithin{figure}{section}

\usepackage{prettyref}

\newrefformat{cor}{Corollary~\ref{#1}}
\newrefformat{subsec}{Section~\ref{#1}}
\newrefformat{lem}{Lemma~\ref{#1}}
\newrefformat{thm}{Theorem~\ref{#1}}
\newrefformat{sec}{Section~\ref{#1}}
\newrefformat{chap}{Chapter~\ref{#1}}
\newrefformat{prop}{Proposition~\ref{#1}}
\newrefformat{exa}{Example~\ref{#1}}
\newrefformat{tab}{Table~\ref{#1}}
\newrefformat{rem}{Remark~\ref{#1}}
\newrefformat{def}{Definition~\ref{#1}}
\newrefformat{fig}{Figure~\ref{#1}}
\newrefformat{claim}{Claim~\ref{#1}}
\newrefformat{assu}{Assumption~\ref{#1}}
\newrefformat{prob}{Problem~\ref{#1}}

\makeatother

\providecommand\remarkname{Remark}
\theoremstyle{remark}
\newtheorem*{rem*}{\protect\remarkname}
\providecommand\theoremname{Theorem}
\theoremstyle{plain}
\newtheorem{thm}{\protect\theoremname}[section]
\providecommand\lemmaname{Lemma}
\newtheorem{lem}[thm]{\protect\lemmaname}
\providecommand\propositionname{Proposition}
\newtheorem{prop}[thm]{\protect\propositionname}
\theoremstyle{remark}
\newtheorem{rem}[thm]{\protect\remarkname}
\begin{document}
\subjclass[2020]{Primary 42C15; Secondary 47B15, 47B06}
\title{Normalized Orbit Frames and Essential Spectrum,~II}
\begin{abstract}
Given $0<r_{j}<1$ with $r_{j}\to1$, there exist a normal operator
$T$ on $\ell^{2}$ with eigenvalues $\lambda_{j}$ satisfying $|\lambda_{j}|=r_{j}$
and $g\in\ell^{2}$ such that 
\[
\left\{ \frac{T^{n}g}{\left\Vert T^{n}g\right\Vert }\right\} _{n\geq0}\text{is a frame}\:\Longleftrightarrow\:\sum_{j\geq1}\left(1-r^{2}_{j}\right)=\infty.
\]
Whenever this condition holds, the frame bounds can be chosen with
ratio arbitrarily close to one. 
\end{abstract}

\author{James Tian}
\address{Mathematical Reviews, 535 W. William St, Suite 210, Ann Arbor, MI
48103, USA}
\email{james.ftian@gmail.com}
\keywords{normalized orbit frames, normal operators, eigenvalue distribution,
dynamical sampling, Riesz sequences}
\thanks{The author thanks Dr. Haixia Zhou for numerous helpful discussions,
as well as the amazing staff Aaron, Alice and Clara. }
\maketitle

\section{Introduction}\label{sec:1}

A normalized orbit of a self-adjoint operator cannot be a frame, but
for a normal operator it is possible. This was pointed out only recently.
Both operators have a spectral representation, quite rigid compared
with general bounded operators. Yet allowing nonreal spectrum changes
the answer.

The problem comes from dynamical sampling. Instead of measuring a
vector against a collection of different vectors, one fixes a vector
$g$ and repeatedly applies an operator $T$. The measurements are
then taken along the orbit $\{T^{n}g\}_{n\geq0}$. The idea was introduced
in \cite{MR3027915} and developed in \cite{MR3613395}. For a recent
survey on the subject, see e.g., \cite{MR5071919}. 

For a bounded operator $T$, one can ask whether $\{T^{n}g\}_{n\geq0}$
is a frame. One can also normalize each vector and ask the same question
for 
\[
\left\{ T^{n}g/\left\Vert T^{n}g\right\Vert \right\} _{n\geq0}.
\]
In the second one, the lengths of the iterates have disappeared, only
their directions remain. It is not clear at first whether these directions
can give stable expansions of every vector in the Hilbert space.

For self-adjoint operators the answer is negative \cite{MR3579135}.
It was conjectured that the same should hold for normal operators
\cite{MR5071919}. That turned out to be false. A recent construction
\cite{arXiv2606.20848} gives a diagonal normal operator with a normalized
orbit frame. Its eigenvalues are placed in finite blocks on circles
approaching the unit circle. The blocks are chosen with considerable
care.

So normal operators can have normalized orbit frames, but which ones?
There are two issues here. One is whether the frame condition places
restrictions on the spectral representation of $T$. The other is
whether those restrictions are sufficient to construct examples. 

An earlier paper \cite{arXiv2609.23804} begins with an arbitrary
bounded normal operator $T$ on a separable Hilbert space $H$. Suppose
that 
\[
x_{n}=\frac{T^{n}g}{\left\Vert T^{n}g\right\Vert },\quad n\geq0,
\]
is a frame for $H$. Since the orbit is complete, $g$ is cyclic.
The spectral theorem then represents $T$ as multiplication by $z$
on $L^{2}(\mu)$, with $g$ corresponding to the constant function
$1$. The normalized orbit becomes 
\[
\frac{z^{n}}{\sqrt{m_{n}}},\quad m_{n}=\int|z|^{2n}d\mu\left(z\right).
\]
Thus the question about a normal operator becomes a question about
normalized monomials in $L^{2}(\mu)$.

This representation itself does not immediately simplify the problem
though. The measure $\mu$ is initially arbitrary. It may have a continuous
part, and its support may occupy a large portion of the disk. Moreover,
the normalization depends on the measure itself. Changing $\mu$ changes
both the Hilbert space and the numbers $m_{n}$. This is quite different
from asking whether a given sequence is a frame in a fixed Hardy or
Bergman space.

A result of \cite{arXiv2609.23804} is that the frame condition leaves
very little freedom in the type of measure that can occur. The measure
must be purely atomic. Its atoms lie strictly inside the outer spectral
circle, and they have no accumulation point in the open disk. After
rescaling the outer radius to one, we may therefore write 
\[
\mu=\sum_{j\geq1}w_{j}\delta_{\lambda_{j}},
\]
$w_{j}>0$, $|\lambda_{j}|<1$, where the $\lambda_{j}$ are distinct
and can accumulate only on the unit circle. The operator is unitarily
equivalent to 
\[
Te_{j}=\lambda_{j}e_{j}\;\text{on }\ell^{2}.
\]
The general normal operator problem is then reduced to a problem about
sequences of complex numbers in the disk, together with positive weights.

A natural question is what those sequences can look like. We approached
this through the essential spectrum \cite{arXiv2609.23804}. For a
diagonal operator of the above form, the essential spectrum consists
of the accumulation points of the eigenvalues on the unit circle.
We proved that a closed set $E$ on the unit circle occurs as the
essential spectrum of a normal operator admitting a normalized orbit
frame if and only if $E$ has positive Haar measure. 

On the other hand, essential spectrum tells us where eigenvalues accumulate,
not how they get there. Two diagonal operators can have the same essential
spectrum while their eigenvalues approach the unit circle at entirely
different rates. One sequence may contain many eigenvalues on a given
circle, another very few. The essential spectrum sees none of this.
So the construction there is free to choose the eigenvalues as needed.
In this paper, we consider instead how much of the construction survives
if the eigenvalue magnitudes have been specified from the outset. 

Specifically, let $0<r_{j}<1$, with $r_{j}\to1$. We want a diagonal
normal operator 
\[
Te_{j}=r_{j}e^{i\theta_{j}}e_{j}
\]
whose normalized orbit is a frame. The numbers $r_{j}$ are given.
We may choose $\theta_{j}$, and the initial vector $g$, but we cannot
change the radii or introduce additional eigenvalues.

The radii may approach one very slowly, or very rapidly along certain
subsequences. A single radius may occur many times. There may be large
gaps between successive distinct radii. The earlier constructions
arrange the eigenvalues in a convenient way. Here that freedom has
disappeared. Therefore, it would be reasonable to expect additional
conditions on the distribution of the radii, or some sort of separation.
But what seems to hold is much stronger. 

We prove that there are complex numbers $\lambda_{j}$ with $|\lambda_{j}|=r_{j}$
and a vector $g\in\ell^{2}$ such that the normal operator $Te_{j}=\lambda_{j}e_{j}$
has a normalized orbit frame if and only if 
\[
\sum_{j\geq1}\left(1-r^{2}_{j}\right)=\infty.
\]
Moreover, whenever the series diverges, we can choose $T$ and $g$
so that the frame bounds are $Q(1-\varepsilon)$ and $Q(1+\varepsilon)$,
where $Q$ depends only on $\varepsilon$, not on the prescribed radii.

The condition has a classical root. For ordinary orbits of diagonal
normal operators, the frame problem is connected with interpolation
in the Hardy space. The Blaschke condition, which appears naturally
in that theory, requires convergence of the corresponding series \cite{MR117349,MR0133446}.
Here we have divergence. It is a curious reversal, although the ordinary
orbit frame problem involves more than the Blaschke condition alone.

One difficulty is that the normalization depends on everything we
choose. If $g=\sum_{j}g_{j}e_{j}$, then 
\[
\left\Vert T^{n}g\right\Vert ^{2}=\sum_{j}|g_{j}|^{2}r^{2n}_{j}.
\]
Changing the magnitude of one coefficient of $g$ changes the normalization
at every time. The radii cannot be treated independently, even though
$T$ is diagonal.

We deal with this first, before choosing any $\theta$ for the eigenvalues.
An elementary minimization gives suitable positive coefficients. Then
Fourier orthogonality and a Riesz selection theorem allow us to choose
the angles. The selection theorem is a substantial ingredient in the
proof. The passage from finite sequences to the prescribed infinite
sequence needs a separate argument, since the coefficients and the
normalized columns must both survive the limit. The construction is
developed in Sections \ref{sec:2}--\ref{sec:4}. The main result
is stated and proved in \prettyref{thm:4-3}.

\subsection*{Related literature}

Operator orbits have been studied for a long time in linear dynamics.
The early questions concern cyclic vectors, dense orbits, and the
behavior of repeated applications of a single operator \cite{MR0241956,MR1111569}.
Frame generation asks for something stronger. An orbit must be complete,
but it must also satisfy two uniform inequalities. This places restrictions
on both the operator and the initial vector.

The question of representing frames as operator orbits has been considered
in several settings. There are results on bounded operator representations,
duality, and stability \cite{MR3694620}, on representations of structured
frames \cite{MR4208084}, and on approximate representations by iterated
systems \cite{MR4361596}. Continuous parameter versions lead to frames
generated by operator semigroups \cite{MR4943786}. There are also
connections with invariant and hyperinvariant subspaces \cite{MR4990268}.

For normal operators, related work concerns iterative actions \cite{MR3579135},
Bessel orbits \cite{MR3582259}, and sampling on finite index sets
\cite{MR4093918}. Several generating vectors can also be handled
through model spaces \cite{MR4198534}.

For diagonal normal operators, ordinary orbit frames are closely related
to interpolation in the disk. The eigenvalues become interpolation
nodes. This connection has been used in the study of Carleson frames
and their redundancy \cite{MR4735722,MR4961034}. The underlying function
theory goes back to classical interpolation in Hardy spaces \cite{MR117349,MR0133446}.
Interpolation in model spaces and Bergman spaces, and the corresponding
density conditions, provide related methods \cite{MR0827223,MR0654483,MR1223222,MR1758653}.

Normalized orbits are less directly connected with a fixed function
space. The numbers $\left\Vert T^{n}g\right\Vert ^{2}$ form a moment
sequence; see e.g., \cite{MR0184042} for the classical moment problem
and related background. But the measure that determines these moments
also determines the vectors whose frame properties we are studying.
This is one reason why the usual interpolation arguments do not apply
without modification.

There is a related line of work on rescaling iterative systems and
on the spectra of normal operators that generate such systems \cite{MR4784098,arXiv2511.15625}.

Finally, our proof uses a result from the theory surrounding the Kadison--Singer
problem. The theorem of Marcus, Spielman, and Srivastava \cite{MR3374963}
led to strong partition and selection results for Bessel sequences.
One such result \cite{MR4013969} allows a Riesz sequence to be chosen
by taking one vector from each of a collection of sufficiently large
finite sets. In our case these sets consist of Fourier modulations
of vectors determined by the prescribed radii.

\section{Choosing the coordinates}\label{sec:2}

Let $\left\{ e_{j}\right\} _{j\geq1}$ be the standard orthonormal
basis (ONB) of $\ell^{2}$. Consider $0<r_{j}<1$, with $r_{j}\to1$.
We want to choose $\theta_{j}$ and $g=\left(g_{j}\right)\in\ell^{2}$
so that the diagonal operator 
\[
Te_{j}=r_{j}e^{i\theta_{j}}e_{j}
\]
has a normalized orbit which is a frame. We first choose the sizes
of the coordinates of $g$. 

Without loss of generality, assume 
\[
0<r_{1}\leq r_{2}\leq\cdots<1,\quad r_{j}\to1.
\]
Each $r_{j}$ occurs only finitely many times. We will return to the
original indexing when we construct the operator.

Fix the first $K$ radii and consider the corresponding diagonal operator
on $\mathbb{C}^{K}$. Let 
\[
g=\left(g_{1},\ldots,g_{K}\right)
\]
have positive real coordinates, with $\left\Vert g\right\Vert =1$.

Then 
\[
T^{n}g=\sum^{K}_{j=1}g_{j}r^{n}_{j}e^{in\theta_{j}}e_{j},\quad\left\Vert T^{n}g\right\Vert ^{2}=\sum^{K}_{j=1}g^{2}_{j}r^{2n}_{j},
\]
and the normalized orbit vectors are 
\[
x_{n}:=\frac{T^{n}g}{\left\Vert T^{n}g\right\Vert }=\sum^{K}_{j=1}\frac{g_{j}r^{n}_{j}e^{in\theta_{j}}}{\left(\sum^{K}_{i=1}g^{2}_{i}r^{2n}_{i}\right)^{1/2}}e_{j}.
\]
In particular, 
\begin{equation}
\left|\left\langle x_{n},e_{j}\right\rangle \right|^{2}=\frac{g^{2}_{j}r^{2n}_{j}}{\sum^{K}_{i=1}g^{2}_{i}r^{2n}_{i}}.\label{eq:b-1}
\end{equation}

Fix a positive integer $Q$ and consider the first $QK$ vectors 
\[
x_{0},\ldots,x_{QK-1}
\]
of the normalized orbit, i.e., 
\begin{equation}
\left.\begin{split}x_{0} & =\frac{1}{\left\Vert g\right\Vert }\left(g_{1},g_{2},\dots,g_{K}\right)\\
x_{1} & =\frac{1}{\left\Vert Tg\right\Vert }\left(g_{1}r_{1}e^{i\theta_{1}},g_{2}r_{2}e^{i\theta_{2}},\ldots,g_{K}r_{K}e^{i\theta_{K}}\right)\\
x_{2} & =\frac{1}{\left\Vert T^{2}g\right\Vert }\left(g_{1}r^{2}_{1}e^{i2\theta_{1}},g_{2}r^{2}_{2}e^{i2\theta_{2}},\ldots,g_{K}r^{2}_{K}e^{i2\theta_{K}}\right)\\
 & \;\vdots\\
x_{QK-1} & =\cdots
\end{split}
\right\} \label{eq:b-1a}
\end{equation}
It follows that 
\begin{equation}
\sum^{QK-1}_{n=0}\sum^{K}_{j=1}\left|\left\langle x_{n},e_{j}\right\rangle \right|^{2}=\sum^{QK-1}_{n=0}\left\Vert x_{n}\right\Vert ^{2}=QK.\label{eq:b-2}
\end{equation}

We would like these vectors $x_{0},\ldots,x_{QK-1}$ to form a frame
for $\mathbb{C}^{K}$ with bounds close to each other. To see what
is needed, suppose first that they form a tight frame with bound $A$.
Then 
\begin{equation}
\sum^{QK-1}_{n=0}\left|\left\langle x_{n},h\right\rangle \right|^{2}=A\left\Vert h\right\Vert ^{2},\quad h\in\mathbb{C}^{K}.\label{eq:b-3}
\end{equation}
Taking $h=e_{j}$, we have 
\[
\sum^{QK-1}_{n=0}\left|\left\langle x_{n},e_{j}\right\rangle \right|^{2}=A,\quad1\leq j\leq K.
\]
Summing over $j$ and using \prettyref{eq:b-2} gives $QK=KA$, hence
\[
A=Q.
\]
Back to \prettyref{eq:b-3} with $h=e_{j}$ and \prettyref{eq:b-1},
we obtain 
\begin{equation}
\sum^{QK-1}_{n=0}\frac{g^{2}_{j}r^{2n}_{j}}{\sum^{K}_{i=1}g^{2}_{i}r^{2n}_{i}}=Q,\quad1\leq j\leq K.\label{eq:b-4}
\end{equation}
That is, every column from the rhs of \prettyref{eq:b-1a} has norm
squared $Q$.
\begin{rem*}
For the finite family $x_{0},\ldots,x_{QK-1}$, consider its frame
operator 
\[
Sh=\sum^{QK-1}_{n=0}\left\langle x_{n},h\right\rangle x_{n},\quad h\in\mathbb{C}^{K}.
\]
Relative to the ONB $\left\{ e_{1},\ldots,e_{K}\right\} $, we get
\[
\begin{aligned}\left\langle e_{j},Se_{j}\right\rangle  & =\sum^{QK-1}_{n=0}\left|\left\langle x_{n},e_{j}\right\rangle \right|^{2}=\sum^{QK-1}_{n=0}\frac{g^{2}_{j}r^{2n}_{j}}{\sum^{K}_{k=1}g^{2}_{k}r^{2n}_{k}}.\end{aligned}
\]
So \prettyref{eq:b-4} implies that every diagonal entry of $S$ equals
$Q$. These depend on $r_{j},g_{j}$, but not $\theta_{j}$. The $\theta_{j}$'s
appear in the off-diagonals, 
\[
\left\langle e_{i},Se_{j}\right\rangle =\sum^{QK-1}_{n=0}\frac{g_{i}g_{j}\left(r_{i}r_{j}\right)^{n}}{\sum^{K}_{k=1}g^{2}_{k}r^{2n}_{k}}e^{in\left(\theta_{i}-\theta_{j}\right)},\quad i\ne j.
\]
A tight frame with bound $Q$ would have $S=QI$.

We separate these two requirements. First, we choose positive $g_{j}$
to make the diagonal entries equal to $Q$. Then we choose $\theta_{j}$
to make the frame operator close to $QI$. For the nearly tight frame
we seek, it is enough that the off-diagonal part be small in operator
norm.
\end{rem*}
The following lemma is a special case of matrix scaling with prescribed
row and column sums \cite[Theorem 2]{MR986905}. We include a direct
proof. 
\begin{lem}
\label{lem:2-1} Let $0<r_{1}\leq\cdots\leq r_{K}<1$, and let $Q$
be a positive integer. There are positive numbers $g_{1},\ldots,g_{K}$,
with $\sum^{K}_{j=1}g^{2}_{j}=1$, such that \prettyref{eq:b-4} holds.
\end{lem}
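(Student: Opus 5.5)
The plan is to obtain the $g_{j}$ as the minimizer of a convex function whose critical points are exactly the solutions of \prettyref{eq:b-4}. Write $N=QK$ and $g_{j}^{2}=e^{u_{j}}$ with $u=(u_{1},\ldots,u_{K})\in\mathbb{R}^{K}$. Define
\[
F(u)=\sum_{n=0}^{N-1}\log\Bigl(\sum_{i=1}^{K}e^{u_{i}}r_{i}^{2n}\Bigr)-Q\sum_{j=1}^{K}u_{j}.
\]
Differentiating gives
\[
\partial F/\partial u_{j}=\sum_{n=0}^{N-1}\frac{e^{u_{j}}r_{j}^{2n}}{\sum_{i}e^{u_{i}}r_{i}^{2n}}-Q,
\]
so a critical point of $F$ is precisely a solution of \prettyref{eq:b-4} with $g_{j}=e^{u_{j}/2}>0$.

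Two structural facts are needed.
\begin{enumerate}
\item $F$ is convex. Each term $\log\sum_{i}e^{u_{i}+2n\log r_{i}}$ is a log-sum-exp of affine functions of $u$, and the remaining term is linear.
\item $F(u+c\mathbf{1})=F(u)$ for every $c\in\mathbb{R}$. Shifting $u$ by $c\mathbf{1}$ adds $Nc$ to the first sum and subtracts $QKc=Nc$ from the second.
\end{enumerate}
Because of the invariance in (2), it suffices to minimize $F$ on the hyperplane $H=\{u:\sum_{j}u_{j}=0\}$. At the end, the normalization $\sum_{j}g_{j}^{2}=1$ is achieved by the shift $u\mapsto u-\log\bigl(\sum_{j}e^{u_{j}}\bigr)\mathbf{1}$, which leaves \prettyref{eq:b-4} unchanged. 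This also matches the redundancy in \prettyref{eq:b-4}: the $K$ equations sum automatically to $N$.

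The main step is coercivity of $F$ on $H$, which yields a minimizer; the minimizer is then a critical point because $F$ is smooth. Take a unit vector $v\in H$. It is nonconstant, so $M:=\max_{i}v_{i}>v_{j}$ for some $j$. Since every $r_{i}>0$, each logarithm satisfies
\[
\log\sum_{i}e^{tv_{i}}r_{i}^{2n}=tM+O(1)\quad\text{as }t\to\infty,
\]
with the $O(1)$ term bounded independently of $t$. Hence
\[
F(tv)\geq t\Bigl(NM-Q\sum_{j}v_{j}\Bigr)-C=tQ\sum_{j}(M-v_{j})-C.
\]
Here the coefficient $QK\,M-Q\sum_{j}v_{j}=Q\sum_{j}(M-v_{j})$ is strictly positive. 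The function $v\mapsto Q\sum_{j}(M(v)-v_{j})$ is continuous and positive on the compact unit sphere of $H$, so it has a positive minimum $\delta$. To make the constant uniform, I would bound each logarithm from below by
\[
\log\Bigl(e^{tM}\min_{i}r_{i}^{2n}\Bigr)=tM+2n\log r_{1}.
\]
This gives $F(tv)\geq t\delta+2\log r_{1}\sum_{n<N}n$ for all unit $v\in H$. Therefore $F\to\infty$ as $\|u\|\to\infty$ in $H$, and a minimizer exists by continuity.

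This coercivity step is the only real obstacle. It is also the only place where the positivity of the radii and the exact balance $N=QK$ are used; if $N$ differed from $QK$, the invariance in (2) would fail and the slopes could be negative. The hypotheses $r_{j}<1$ and the ordering play no role here.
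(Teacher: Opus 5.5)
Your proof is correct and follows essentially the same route as the paper. Both use the substitution $g_j^2=e^{u_j}$, minimize the log-sum-exp function on the hyperplane $\sum_j u_j=0$, and get coercivity from the bound $\log\sum_i e^{u_i}r_i^{2n}\ge \max_i u_i+2n\log r_1$. The one difference is cosmetic: the paper minimizes $\Phi$ itself on $H$ and reads off \eqref{eq:b-4} from the Lagrange condition (all partials equal, summing to $QK$). Your extra term $-Q\sum_j u_j$, which vanishes on $H$, instead makes $F$ translation-invariant, so the minimizer is an unconstrained critical point.
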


\begin{proof}
In \prettyref{eq:b-4}, replace $g^{2}_{j}$ by $e^{y_{j}}$, then
\[
\frac{\partial}{\partial y_{j}}\log\left(\sum^{K}_{i=1}e^{y_{i}}r^{2n}_{i}\right)=\frac{e^{y_{j}}r^{2n}_{j}}{\sum^{K}_{i=1}e^{y_{i}}r^{2n}_{i}}.
\]
Define
\[
\Phi\left(y_{1},\ldots,y_{K}\right)=\sum^{QK-1}_{n=0}\log\left(\sum^{K}_{i=1}e^{y_{i}}r^{2n}_{i}\right).
\]
Adding the same constant to every $y_{j}$ does not change the fractions
(common factor cancels). So we assume $\sum^{K}_{j=1}y_{j}=0$, and
minimize $\Phi$ on the hyperplane
\[
H=\left\{ y\in\mathbb{R}^{K}:\sum y_{j}=0\right\} .
\]

Since $r_{j}\nearrow1$, we have 
\begin{gather}
\sum^{K}_{j=1}e^{y_{j}}r^{2n}_{j}\geq r^{2n}_{1}e^{\max_{j}y_{j}}\nonumber \\
\Downarrow\nonumber \\
\Phi\left(y\right)\geq QK\max_{j}y_{j}+QK\left(QK-1\right)\log r_{1}.\label{eq:b-5}
\end{gather}

Let $M=\max_{j}y_{j}$. Since $\sum_{j}y_{j}=0$, we have $M\geq0$.
Then 
\begin{equation}
-\left(K-1\right)M\leq-\sum_{i\neq j}y_{i}=y_{j}\leq M,\quad1\leq j\leq K.\label{eq:b-6}
\end{equation}
Combining \prettyref{eq:b-5}--\prettyref{eq:b-6}, 
\[
\left\Vert y\right\Vert \rightarrow\infty\Longrightarrow M\rightarrow\infty\Longrightarrow\Phi\left(y\right)\rightarrow\infty.
\]
Thus, by continuity, $\Phi$ attains a minimum on $H$.

At a minimizer, the partial derivatives of $\Phi$ are equal, but
\[
\begin{aligned}\sum^{K}_{j=1}\frac{\partial\Phi}{\partial y_{j}} & =\sum^{QK-1}_{n=0}\sum^{K}_{j=1}\frac{e^{y_{j}}r^{2n}_{j}}{\sum^{K}_{i=1}e^{y_{i}}r^{2n}_{i}}=\sum^{QK-1}_{n=0}1=QK.\end{aligned}
\]
So we get
\[
\frac{\partial\Phi}{\partial y_{j}}=Q,\quad\forall j.
\]
Finally, setting 
\[
g_{j}=\frac{e^{y_{j}/2}}{\left(\sum^{K}_{i=1}e^{y_{i}}\right)^{1/2}}
\]
we get \prettyref{eq:b-4}.
\end{proof}

\section{Passing to the limit}\label{sec:3}

We now look for a single $g\in\ell^{2}$ so that the infinite version
of \prettyref{eq:b-1a} has the form 
\begin{equation}
\left.\begin{split}x_{0} & =\frac{1}{\sqrt{m_{0}}}\left(g_{1},g_{2},\dots\right)\\
x_{1} & =\frac{1}{\sqrt{m_{1}}}\left(g_{1}r_{1}e^{i\theta_{1}},g_{2}r_{2}e^{i\theta_{2}},\ldots\right)\\
x_{2} & =\frac{1}{\sqrt{m_{2}}}\left(g_{1}r^{2}_{1}e^{i2\theta_{1}},g_{2}r^{2}_{2}e^{i2\theta_{2}},\ldots\right)\\
 & \;\vdots
\end{split}
\right\} \label{eq:3-1a}
\end{equation}
where 
\[
m_{n}:=\sum_{j\geq1}g^{2}_{j}r^{2n}_{j}.
\]

We want to choose $g$ so that every column has squared norm $Q$,
as in the finite case. The next proposition gives such a choice under
the divergence condition.
\begin{prop}
\label{prop:3-1} Suppose $0<r_{1}\leq r_{2}\leq\cdots<1$, $r_{j}\to1$,
and 
\[
\sum_{j\geq1}\left(1-r^{2}_{j}\right)=\infty.
\]
For each $Q\in\mathbb{Z}_{+}$, there exist $g_{j}>0$ with $\sum_{j}g^{2}_{j}=1$
such that,
\begin{equation}
\sum_{n\geq0}\frac{g^{2}_{j}r^{2n}_{j}}{m_{n}}=Q,\quad j\geq1.\label{eq:3-1}
\end{equation}
\end{prop}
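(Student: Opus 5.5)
The plan is to obtain \eqref{eq:3-1} as a limit of the finite solutions given by \prettyref{lem:2-1}. For each $K$, the lemma gives positive numbers $g^{(K)}_{1},\ldots,g^{(K)}_{K}$ with $\sum_{j}(g^{(K)}_{j})^{2}=1$ satisfying \prettyref{eq:b-4}. Set $m^{(K)}_{n}=\sum_{i\le K}(g^{(K)}_{i})^{2}r^{2n}_{i}$. Extend each vector by zeros and apply a diagonal argument to get a subsequence $K_{\ell}\to\infty$ along which $g^{(K_\ell)}_{j}\to g_{j}\ge0$ for every $j$.

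Two observations make the limit manageable.

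\emph{Homogeneity.} Equation \eqref{eq:3-1} is invariant under $g\mapsto cg$. So it suffices to produce a nonzero $g\in\ell^{2}$ satisfying \eqref{eq:3-1}, and then normalize.

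\emph{Convergence of the moments.} By Fatou, $\sum_{j}g_{j}^{2}\le1$. Let $\delta:=1-\sum_{j}g_{j}^{2}$ be the mass that escapes to infinity. Since $r_{j}\to1$, the escaping mass contributes a factor $r_{j}^{2n}\to1$ for each fixed $n$. Splitting at a large index $J$ then shows
\[
m^{(K_\ell)}_{n}\to m_{n}+\delta,\qquad m_{n}=\sum_{j}g_{j}^{2}r_{j}^{2n},
\]
for each fixed $n$.

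The next step is to pass to the limit inside the column sums $\sum_{n<QK}(g^{(K)}_{j})^{2}r_{j}^{2n}/m^{(K)}_{n}=Q$. For this I want a summable majorant in $n$ that is uniform in $K$. I plan to get it from a lower bound on the denominator, namely $m^{(K)}_{n}\ge\sum_{i\ge j}(g^{(K)}_{i})^{2}r_{i}^{2n}\ge\tau_{j}^{(K)}r_{j}^{2n}$. Here $\tau^{(K)}_{j}$ is the mass on indices $\ge j$, and this uses the monotonicity $r_{i}\ge r_{j}$ for $i\ge j$. The needed ingredient is therefore $\inf_{K}\tau^{(K)}_{j}>0$ for every $j$.

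I would argue this by contradiction using the column identities. If the mass on $\{i\ge j\}$ were tiny while the mass on $\{i<j\}$ carried essentially everything, then the fractions for coordinates $i\ge j$ would decay. Comparing the rates $r_{i}^{2n}$ against $r_{j-1}^{2n}$, the sum of the $\ge j$ columns would fall below what \prettyref{eq:b-4} demands, namely $Q$ per column.

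Even with this bound, the majorant $(g_j^{(K)})^2/\tau_j$ is only bounded, not summable in $n$. So I expect to supplement it with a tail estimate of the form
\[
\sum_{n\ge N}\frac{(g^{(K)}_{j})^{2}r_{j}^{2n}}{m^{(K)}_{n}}\to0\quad\text{as }N\to\infty,\ \text{uniformly in }K.
\]
I would extract this from the fact that the total of all columns is exactly $QK$, distributed evenly. This uniform control of the $n$-tails is the step I expect to be the main obstacle. If a direct bound fails, the fallback is to use Fatou for the inequality $\le Q$ in the limit, and to recover equality by summing the columns over $j\le J$ and comparing with the row sums, each of which is at most $1$.

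Granting the passage to the limit, $g$ satisfies
\[
\sum_{n\ge0}\frac{g_{j}^{2}r_{j}^{2n}}{m_{n}+\delta}=Q\qquad\text{for every }j.
\]
In particular every $g_{j}>0$, since otherwise the left side would be $0$. This is where the divergence hypothesis enters, to show that $\delta=0$. Suppose instead that $\delta>0$. Since $m_{n}+\delta\ge\delta$, the displayed identity gives
\[
Q\le\frac{g_{j}^{2}}{\delta(1-r_{j}^{2})},\qquad\text{so}\qquad g_{j}^{2}\ge Q\delta\,(1-r_{j}^{2}).
\]
Summing over $j$ and using $\sum_{j}(1-r_{j}^{2})=\infty$ contradicts $\sum_{j}g_{j}^{2}\le1$. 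Hence $\delta=0$, so $\sum_{j}g_{j}^{2}=1$ and $m^{(K_\ell)}_{n}\to m_{n}$, and \eqref{eq:3-1} holds with these positive $g_{j}$.
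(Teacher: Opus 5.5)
Your outline matches the paper's: finite solutions from \prettyref{lem:2-1}, diagonal extraction, $m^{(K)}_{n}\to m_{n}+\delta$, then divergence to force $\delta=0$. Your final step, summing $g_{j}^{2}\ge Q\delta(1-r_{j}^{2})$ over $j$, is a correct and slightly more direct form of the paper's telescoping bound.

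The genuine gap is the passage to the limit in the column identities, which you flag as the main obstacle and then grant. Everything downstream depends on it. Fatou only gives $\sum_{n}g_{j}^{2}r_{j}^{2n}/(m_{n}+\delta)\le Q$. That yields neither $g_{j}>0$ nor the contradiction for $\delta>0$, which needs the reverse inequality. Your majorant built from $\tau^{(K)}_{j}$ at the rate $r_{j}$ is only bounded in $n$, as you note, and neither proposed remedy produces a tail bound:
\begin{itemize}
\item That all columns together total $QK$ does not control how one fixed column distributes its mass in $n$.
\item The row-sum fallback fails too. When $\delta>0$ the limiting rows sum to $m_{n}/(m_{n}+\delta)<1$, so there is no conservation identity left to compare with. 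The deficit in column $j$ is mass escaping to $n\to\infty$, and row sums over finitely many columns cannot detect it.
\end{itemize}

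The missing idea is a strict gap between radii. Fix $j$ and choose $J\ge j$ with $r_{J+1}>r_{j}$, which is possible because $r_{i}\to1$. Suppose $\sum_{i>J}(g^{(K)}_{i})^{2}\ge b/2>0$ for all large $K$. Then $m^{(K)}_{n}\ge\frac{b}{2}r_{J+1}^{2n}$, and
\[
0\le\frac{(g^{(K)}_{j})^{2}r_{j}^{2n}}{m^{(K)}_{n}}\le\frac{2}{b}\Bigl(\frac{r_{j}}{r_{J+1}}\Bigr)^{2n}.
\]
This majorant is geometric and independent of $K$, so dominated convergence gives the identity with $=Q$. The required lower bound is exactly your claim $\inf_{K}\tau^{(K)}_{J+1}>0$ (for large $K$), applied at an index of strictly larger radius rather than at $j$ itself.

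The paper proves this lower bound in the limit. If $\sum_{i\le J}g_{i}^{2}=1$, then $\delta=0$ and $g_{i}=0$ for $i>J$. So every limiting row sums to $1$ over $i\le J$, and summing the Fatou bounds over $i\le J$ gives $\infty\le JQ$. Hence $b:=1-\sum_{i\le J}g_{i}^{2}>0$, and $\sum_{i>J}(g^{(K)}_{i})^{2}\to b$.

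Your finite-$K$ route to this bound also works, but the mechanism is the reverse of what you describe: the columns with larger radii do not decay. Instead, if nearly all mass sits on the first $J$ indices, each early row $n<N$ puts at least half of its unit mass on the first $J$ columns. Those columns then receive at least $N/2$, which exceeds their total allotment $JQ$ once $N>2JQ$.
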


\begin{proof}
For each $K$, apply \prettyref{lem:2-1} to the first $K$ radii.
Write $g^{(K)}_{j}$ for the positive coordinates it gives and let
$N_{K}:=QK$. These satisfy 
\[
\sum^{K}_{j=1}(g^{(K)}_{j})^{2}=1
\]
and 
\begin{equation}
m^{(K)}_{n}=\sum^{K}_{j=1}(g^{(K)}_{j})^{2}r^{2n}_{j}.\label{eq:c-2}
\end{equation}
Then \prettyref{eq:b-4} gives 
\[
\sum^{N_{K}-1}_{n=0}\frac{\left(g^{(K)}_{j}\right)^{2}r^{2n}_{j}}{m^{(K)}_{n}}=Q,\quad1\leq j\leq K.
\]

Extend each finite vector $g^{(K)}=(g^{(K)}_{1},\ldots,g^{(K)}_{K})$
to $\ell^{2}$ by setting $g^{(K)}_{j}=0$ for $j>K$, so
\begin{eqnarray*}
g^{\left(1\right)} & = & \left(g^{\left(1\right)}_{1},0,0,\dots\right),\\
g^{\left(2\right)} & = & \left(g^{\left(2\right)}_{1},g^{\left(2\right)}_{2},0,0,\dots\right),\\
g^{\left(3\right)} & = & \left(g^{\left(3\right)}_{1},g^{\left(3\right)}_{2},g^{\left(3\right)}_{3},0,0,\dots\right),\\
 & \vdots
\end{eqnarray*}
Since each vector has norm one, we have $0\leq g^{(K)}_{j}\leq1$
for every $j,K$. 

The first column has a convergent subsequence, so among the corresponding
rows, choose a further subsequence along which the second column converges,
and so on. A diagonal selection gives an increasing sequence $K_{k}\to\infty$
and $g_{j}\geq0$ such that 
\begin{equation}
g^{(K_{k})}_{j}\longrightarrow g_{j}\quad\text{for every fixed }j.\label{eq:3-2}
\end{equation}
The selected rows have the form 
\begin{eqnarray*}
g^{\left(K_{1}\right)} & = & \left(g^{\left(K_{1}\right)}_{1},\ldots,g^{\left(K_{1}\right)}_{K_{1}},0,\ldots\right),\quad\sum(g^{\left(K_{1}\right)}_{j})^{2}=1\\
g^{\left(K_{2}\right)} & = & \left(g^{\left(K_{2}\right)}_{1},\ldots,g^{\left(K_{2}\right)}_{K_{2}},0,\ldots\right),\quad\sum(g^{\left(K_{2}\right)}_{j})^{2}=1\\
g^{\left(K_{3}\right)} & = & \left(g^{\left(K_{3}\right)}_{1},\ldots,g^{\left(K_{3}\right)}_{K_{3}},0,\ldots\right),\quad\sum(g^{\left(K_{3}\right)}_{j})^{2}=1\\
 & \downarrow & \text{coordinatewise}\\
g & = & \left(g_{1},g_{2},g_{3},\dots\right).
\end{eqnarray*}
For simplicity, we continue to use $K$ to index this subsequence. 

For every $J$, 
\[
\sum^{J}_{j=1}g^{2}_{j}=\sum^{J}_{j=1}\lim_{K\to\infty}(g^{(K)}_{j})^{2}=\lim_{K\to\infty}\sum^{J}_{j=1}(g^{(K)}_{j})^{2}\leq1.
\]
Allowing $J\to\infty$, 
\[
\sum_{j}g^{2}_{j}\leq1,
\]
but equality is not yet known.

Using \prettyref{eq:c-2}, 
\begin{align}
1-m^{(K)}_{n} & =\sum^{K}_{j=1}(g^{(K)}_{j})^{2}\left(1-r^{2n}_{j}\right)\nonumber \\
 & =\sum^{J}_{j=1}(g^{(K)}_{j})^{2}\left(1-r^{2n}_{j}\right)+\sum^{K}_{j=J+1}(g^{(K)}_{j})^{2}\left(1-r^{2n}_{j}\right).\label{eq:c-3}
\end{align}

For fixed $n,J$, the first sum in \prettyref{eq:c-3} converges as
$K\to\infty$, since every fixed coordinate converges; the second
satisfies 
\[
0\leq\sum^{K}_{j=J+1}(g^{(K)}_{j})^{2}\left(1-r^{2n}_{j}\right)\leq1-r^{2n}_{J+1}\xrightarrow{\;J\rightarrow\infty\;}0,
\]
independently of $K$. Thus we take limits, first in $K$, then in
$J$, and get 
\[
1-m^{\left(K\right)}_{n}\longrightarrow\sum_{j\geq1}g^{2}_{j}\left(1-r^{2n}_{j}\right).
\]
That is, 
\[
\begin{aligned}m^{(K)}_{n}\longrightarrow & 1-\sum\nolimits_{j\geq1}g^{2}_{j}(1-r^{2n}_{j})\\
 & \quad=\underbrace{\left(1-\sum\nolimits_{j\geq1}g^{2}_{j}\right)}_{a}+\sum\nolimits_{j\geq1}g^{2}_{j}r^{2n}_{j}=a+m_{n}.
\end{aligned}
\]

We claim that $a=0$. Note that $a+m_{0}=1$ and $a+m_{n}>0$ for
every $n$.

For each fixed $j,n$, the convergence above gives 
\[
\frac{(g^{(K)}_{j})^{2}r^{2n}_{j}}{m^{(K)}_{n}}\longrightarrow\frac{g^{2}_{j}r^{2n}_{j}}{a+m_{n}},\quad K\rightarrow\infty.
\]
For every fixed $M$, once $N_{K}>M$, 
\[
\sum^{M}_{n=0}\frac{(g^{(K)}_{j})^{2}r^{2n}_{j}}{m^{(K)}_{n}}\leq\sum^{N_{K}-1}_{n=0}\frac{(g^{(K)}_{j})^{2}r^{2n}_{j}}{m^{(K)}_{n}}=Q.
\]
Letting $K\to\infty$, then $M\rightarrow\infty$ gives 
\begin{equation}
\sum_{n\geq0}\frac{g^{2}_{j}r^{2n}_{j}}{a+m_{n}}\leq Q.\label{eq:c-5}
\end{equation}

We first show that 
\[
\sum^{J}_{j=1}g^{2}_{j}<1
\]
for every $J$. Otherwise, for some $J$, $\sum^{J}_{j=1}g^{2}_{j}=1$.
Then 
\[
a=1-\sum g^{2}_{j}=0,
\]
and all $g_{j}$ with $j>J$ vanish. Hence 
\[
m_{n}=\sum^{J}_{j=1}g^{2}_{j}r^{2n}_{j}>0.
\]
But then 
\[
\infty=\sum_{n\geq0}\frac{\sum^{J}_{j=1}g^{2}_{j}r^{2n}_{j}}{m_{n}}=\sum^{J}_{j=1}\left(\sum_{n\geq0}\frac{g^{2}_{j}r^{2n}_{j}}{m_{n}}\right)\leq JQ,
\]
a contradiction, where the inequality follows from \prettyref{eq:c-5}
with $a=0$. 

Now fix $j$ and choose $J\geq j$ so that $r_{j}<r_{J+1}$. This
is possible since $r_{i}\to1$ and $r_{j}<1$. Let 
\[
b=1-\sum^{J}_{i=1}g^{2}_{i}>0.
\]
Returning to our selected rows, 
\[
\sum^{K}_{i=J+1}(g^{(K)}_{i})^{2}=1-\sum^{J}_{i=1}(g^{(K)}_{i})^{2}\longrightarrow b.
\]
Thus, for sufficiently large $K$, 
\[
\sum^{K}_{i=J+1}(g^{(K)}_{i})^{2}>\frac{b}{2}.
\]
Since the radii are increasing, for every $n\geq0$, 
\[
\begin{aligned}m^{(K)}_{n} & =\sum^{K}_{i=1}(g^{(K)}_{i})^{2}r^{2n}_{i}\geq\sum^{K}_{i=J+1}(g^{(K)}_{i})^{2}r^{2n}_{i}\geq\frac{b}{2}r^{2n}_{J+1}.\end{aligned}
\]
This implies 
\[
0\leq\frac{(g^{(K)}_{j})^{2}r^{2n}_{j}}{m^{(K)}_{n}}\leq\frac{2}{b}\left(\frac{r_{j}}{r_{J+1}}\right)^{2n}.
\]
Dominated convergence then gives 
\[
\lim_{K\rightarrow\infty}\sum^{N_{K}-1}_{n=0}\frac{(g^{(K)}_{j})^{2}r^{2n}_{j}}{m^{(K)}_{n}}=\sum_{n\geq0}\frac{g^{2}_{j}r^{2n}_{j}}{a+m_{n}}.
\]
The sum on the left equals $Q$ for every sufficiently large $K$,
so 
\[
\sum_{n\geq0}\frac{g^{2}_{j}r^{2n}_{j}}{a+m_{n}}=Q,\quad j\geq1.
\]
In particular, $g_{j}>0$ for every $j$.

It follows that 
\begin{align}
Q\sum_{j\geq1}\left(1-r^{2}_{j}\right) & =\sum_{j}\left(1-r^{2}_{j}\right)\sum_{n\geq0}\frac{g^{2}_{j}r^{2n}_{j}}{a+m_{n}}\nonumber \\
 & =\sum_{n\geq0}\frac{\sum_{j}g^{2}_{j}\left(1-r^{2}_{j}\right)r^{2n}_{j}}{a+m_{n}}\nonumber \\
 & =\sum_{n\geq0}\frac{m_{n}-m_{n+1}}{a+m_{n}}.\label{eq:3-7}
\end{align}

By hypothesis, $\sum_{j\geq1}\left(1-r^{2}_{j}\right)=\infty$. If
$a>0$, then \prettyref{eq:3-7} becomes 
\[
\infty=\begin{aligned}\sum_{n\geq0}\frac{m_{n}-m_{n+1}}{a+m_{n}} & \leq\frac{1}{a}\sum_{n\geq0}\left(m_{n}-m_{n+1}\right)\leq\frac{m_{0}}{a}\leq\frac{1}{a}\end{aligned}
,
\]
a contradiction. Here, we used 
\[
m_{n}\leq m_{0}=\sum g^{2}_{j}=1-a<1.
\]

Thus $a=0$ and $\sum_{j}g^{2}_{j}=1$. The assertion \prettyref{eq:3-1}
holds. 
\end{proof}

\section{Choosing the angles}\label{sec:4}

We have chosen the positive coordinates of $g$ in \prettyref{sec:3}.
The angles $\theta_{j}$ of the eigenvalues are still free. We first
choose them for the finite system in \prettyref{eq:b-1a}, then pass
to the limit as in \prettyref{eq:3-1a}.

Fix $K,Q\in\mathbb{Z}_{+}$. Take the coordinates $g^{(K)}_{j}$ given
by \prettyref{lem:2-1}. Write 
\[
a_{j}\left(n\right)=\frac{g^{(K)}_{j}r^{n}_{j}}{\sqrt{m^{(K)}_{n}}},\quad1\leq j\leq K,\quad0\leq n<QK.
\]
The normalized orbit vectors have the form 
\[
x_{n}=\left(a_{1}\left(n\right)e^{in\theta_{1}},\ldots,a_{K}\left(n\right)e^{in\theta_{K}}\right),\quad0\leq n<QK.
\]
We know that 
\begin{align}
\sum^{K}_{j=1}a_{j}\left(n\right)^{2} & =1,\label{eq:4-1a}\\
\sum^{QK-1}_{n=0}a_{j}\left(n\right)^{2} & =Q.\label{eq:4-1}
\end{align}
The first identity \prettyref{eq:4-1a} says that every $x_{n}$ has
norm one, \prettyref{eq:4-1} comes from our choice of $g^{(K)}_{j}$.

Recall the frame operator from \prettyref{sec:2}, 
\[
Sh=\sum^{QK-1}_{n=0}\left\langle x_{n},h\right\rangle x_{n},\quad h\in\mathbb{C}^{K}.
\]
By \prettyref{eq:4-1}, 
\[
\left\langle e_{j},Se_{j}\right\rangle =\sum^{QK-1}_{n=0}\left|\left\langle x_{n},e_{j}\right\rangle \right|^{2}=Q,
\]
i.e., under the ONB $\left(e_{j}\right)$, all diagonal entries of
$S$ are $Q$.

The angles only occur in its off-diagonal entries. We want to choose
them so that 
\[
\left\Vert S-QI\right\Vert \leq\varepsilon Q.
\]
Then 
\[
\left|\left\langle h,\left(S-QI\right)h\right\rangle \right|\leq\left\Vert h\right\Vert \left\Vert \left(S-QI\right)h\right\Vert \leq\varepsilon Q\left\Vert h\right\Vert ^{2},
\]
and so 
\begin{equation}
Q\left(1-\varepsilon\right)\left\Vert h\right\Vert ^{2}\leq\sum^{QK-1}_{n=0}\left|\left\langle x_{n},h\right\rangle \right|^{2}\leq Q\left(1+\varepsilon\right)\left\Vert h\right\Vert ^{2}.\label{eq:4-2}
\end{equation}

We will not estimate the off-diagonal entries individually. The following
result chooses all the angles at once, using only the two identities
in \prettyref{eq:4-1a}--\prettyref{eq:4-1}. The special form of
$a_{j}\left(n\right)$ is not needed.
\begin{prop}
\label{prop:4-1} For every $0<\varepsilon<1$, there is a positive
integer $Q$, depending only on $\varepsilon$, such that for every
$K$ and every nonnegative real array satisfying \prettyref{eq:4-1a}--\prettyref{eq:4-1},
the angles $\theta_{1},\ldots,\theta_{K}$ can be chosen so that \prettyref{eq:4-2}
holds. 
\end{prop}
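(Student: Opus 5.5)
The plan is to recast the frame condition as a Riesz sequence condition on the columns of the $QK\times K$ matrix whose rows are $x_{0},\ldots,x_{QK-1}$. Then I would choose the angles from a finite grid of Fourier frequencies using the selection result of \cite{MR4013969}.

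\textbf{Step 1: reformulation.} Let $X$ be the $N\times K$ matrix, $N=QK$, with entries $X_{nj}=a_{j}(n)e^{in\theta_{j}}$. Then $S=X^{*}X$, so
\[
\left\langle e_{i},Se_{j}\right\rangle =\left\langle c_{i},c_{j}\right\rangle _{\mathbb{C}^{N}},\qquad c_{j}=\left(a_{j}(n)e^{in\theta_{j}}\right)_{n=0}^{N-1}.
\]
Condition \prettyref{eq:4-2} holds if and only if the vectors $v_{j}=c_{j}/\sqrt{Q}$ satisfy
\[
(1-\varepsilon)\sum_{j}|\beta_{j}|^{2}\leq\Bigl\Vert\sum_{j}\beta_{j}v_{j}\Bigr\Vert^{2}\leq(1+\varepsilon)\sum_{j}|\beta_{j}|^{2},
\]
that is, they form a Riesz sequence with bounds $1\pm\varepsilon$. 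By \prettyref{eq:4-1}, each $v_{j}$ has $\Vert v_{j}\Vert=1$ whatever $\theta_{j}$ is.

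\textbf{Step 2: the candidate family.} Restrict to angles $\theta_{j}\in\{2\pi k/N:0\leq k<N\}$ and set
\[
u_{j,k}=Q^{-1/2}\bigl(a_{j}(n)e^{2\pi ink/N}\bigr)_{n=0}^{N-1}.
\]
Each $u_{j,k}$ is a unit vector, and the $j$-th block $\{u_{j,k}\}_{k}$ has $N$ elements. Fourier orthogonality on $\mathbb{Z}_{N}$ gives, for $h\in\mathbb{C}^{N}$,
\[
\sum_{k}|\langle u_{j,k},h\rangle|^{2}=\frac{N}{Q}\sum_{n}a_{j}(n)^{2}|h_{n}|^{2}.
\]
Summing over $j$ and using \prettyref{eq:4-1a} yields
\[
\sum_{j,k}|\langle u_{j,k},h\rangle|^{2}=\frac{N}{Q}\Vert h\Vert^{2}=K\Vert h\Vert^{2}.
\]
So the whole family is tight with bound $K$, while each block has $N=QK$ unit vectors. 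After rescaling by $K^{-1/2}$, we get a Bessel family with bound $1$ in which each block carries total squared norm $Q$ and each vector has squared norm $1/K$. The key point is that this ratio is independent of $K$ and of the particular array $a_{j}(n)$; only \prettyref{eq:4-1a}--\prettyref{eq:4-1} were used.

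\textbf{Step 3: selection.} Apply the Riesz selector theorem of \cite{MR4013969}, a consequence of Marcus--Spielman--Srivastava. Its content, in the form needed here, is the following. Take unit vectors organized into blocks, with the entire collection Bessel with bound at most $(\text{block size})/Q$. Then one can pick one vector $u_{j,k_{j}}$ from each block so that the selection is a Riesz sequence with bounds $1-\varepsilon(Q)$ and $1+\varepsilon(Q)$, where $\varepsilon(Q)\to0$ as $Q\to\infty$ and $\varepsilon(Q)$ is independent of the number of blocks and the dimension. Choosing $Q$ with $\varepsilon(Q)\leq\varepsilon$ and setting $\theta_{j}=2\pi k_{j}/N$ gives $\Vert S-QI\rVert\leq\varepsilon Q$, hence \prettyref{eq:4-2}.

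\textbf{Main obstacle.} The delicate step is matching the exact hypotheses and conclusion of the cited selector theorem. Some versions give only a lower Riesz bound, or bounds of the form $1\pm O(Q^{-1/2})$, or require each block itself to be a frame or to satisfy a small-norm condition. My first attempt would be to use the two-sided version with normalized vectors, where unit norms help. If only a lower bound is available, the upper bound $\Vert\sum\beta_{j}v_{j}\Vert^{2}\leq(1+\varepsilon)\sum|\beta_{j}|^{2}$ must come from elsewhere. In that case I would use that $S$ has diagonal $Q$ and trace $QK$: a lower bound $S\geq Q(1-\varepsilon)I$ forces the eigenvalues to sum to $QK$. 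That alone controls the spectrum only on average, so I would look for a two-sided selector statement rather than rely on this trace argument.
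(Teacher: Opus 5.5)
Your proposal is correct and follows the paper's proof essentially step for step. You use the same reformulation of \eqref{eq:4-2} as a Riesz-sequence condition on the columns divided by $\sqrt{Q}$, and the same candidates $u_{j,l}$ built by modulating with the $QK$-th roots of unity. You also get the same Bessel bound $K$ from discrete Fourier orthogonality together with \eqref{eq:4-1a}, and then apply the same selector theorem. The paper cites \cite[Theorem 3.8]{MR4013969}, which is already two-sided: unit vectors with Bessel bound $B$ and groups of size at least $CB\varepsilon^{-4}$ yield a selection with Riesz bounds $1\pm\varepsilon$. With $B=K$ and group size $QK$ one simply takes $Q\geq C\varepsilon^{-4}$, so the concern in your last paragraph does not arise.
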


\begin{proof}
We use \cite[Theorem 3.8]{MR4013969}. Let $\left\{ u_{i}\right\} _{i\in I}$
be a family of unit vectors in a Hilbert space $H$, satisfying 
\[
\sum_{i\in I}\left|\left\langle u_{i},z\right\rangle \right|^{2}\leq B\left\Vert z\right\Vert ^{2},\quad z\in H.
\]
Suppose $J_{1},\ldots,J_{K}$ are disjoint subsets of $I$, each containing
at least $CB\varepsilon^{-4}$ indices, where $C$ is an absolute
constant. The theorem allows us to choose $i_{j}\in J_{j}$, one from
each group, so that 
\[
\left(1-\varepsilon\right)\sum^{K}_{j=1}\left|c_{j}\right|^{2}\leq\left\Vert \sum^{K}_{j=1}c_{j}u_{i_{j}}\right\Vert ^{2}\leq\left(1+\varepsilon\right)\sum^{K}_{j=1}\left|c_{j}\right|^{2}
\]
for all scalars $c_{1},\ldots,c_{K}$.

In our problem, we will take $H=\mathbb{C}^{QK}$ and construct $K$
groups of unit vectors, with $QK$ vectors in each group. We want
the complete family to have Bessel bound $B=K$. If this can be done,
the group size condition becomes 
\[
QK\geq CK\varepsilon^{-4},
\]
which holds once $Q\geq C\varepsilon^{-4}$, independently of $K$.

We now construct those vectors. For $h=\sum^{K}_{j=1}c_{j}e_{j}$,
\begin{align*}
\left\langle h,Sh\right\rangle  & =\sum^{QK-1}_{n=0}\left\langle h,x_{n}\right\rangle \left\langle x_{n},h\right\rangle \\
 & =\sum^{QK-1}_{n=0}\left|\left\langle x_{n},h\right\rangle \right|^{2}=\sum^{QK-1}_{n=0}\left|\sum^{K}_{j=1}c_{j}a_{j}\left(n\right)e^{-in\theta_{j}}\right|^{2}.
\end{align*}
This suggests the following: 

Fix $j$. As $n$ varies, the coefficients of $c_{j}$ form the vector
\[
\frac{1}{\sqrt{Q}}\left(a_{j}\left(n\right)e^{-in\theta_{j}}\right)^{QK-1}_{n=0}\in\mathbb{C}^{QK}.
\]
By \prettyref{eq:4-1}, this vector has norm one for every choice
of $\theta_{j}$. Thus each possible angle gives one candidate vector
for the $j$-th group.

We allow $\theta_{j}$ to take the $QK$ equally spaced values $2\pi l/(QK)$,
where $0\leq l<QK$. This gives 
\[
u_{j,l}=\frac{1}{\sqrt{Q}}\left(a_{j}\left(n\right)e^{-2\pi inl/\left(QK\right)}\right)^{QK-1}_{n=0}
\]
with $1\leq j\leq K$, $0\leq l<QK$. So, for each fixed $j$, these
are the $QK$ candidates in the $j$-th group. 

Note that, by \prettyref{eq:4-1}, 
\[
\left\Vert u_{j,l}\right\Vert ^{2}=\frac{1}{Q}\sum^{QK-1}_{n=0}a_{j}\left(n\right)^{2}=1.
\]

It remains to verify the Bessel bound for all these vectors together.
Let $z\in\mathbb{C}^{QK}$. Orthogonality of the $QK$-th roots of
unity gives 
\[
\begin{aligned}\sum^{K}_{j=1}\sum^{QK-1}_{l=0}\left|\left\langle u_{j,l},z\right\rangle \right|^{2} & =\frac{1}{Q}\sum^{K}_{j=1}\sum^{QK-1}_{l=0}\left|\sum^{QK-1}_{n=0}a_{j}\left(n\right)z\left(n\right)e^{2\pi inl/(QK)}\right|^{2}\\
 & =K\sum^{K}_{j=1}\sum^{QK-1}_{n=0}a_{j}\left(n\right)^{2}\left|z\left(n\right)\right|^{2}\\
 & =K\sum^{QK-1}_{n=0}\left|z\left(n\right)\right|^{2}=K\left\Vert z\right\Vert ^{2}.
\end{aligned}
\]
The last equality uses \prettyref{eq:4-1a}. Thus the whole family
has Bessel bound $K$. 

Choose a positive integer $Q\geq C\varepsilon^{-4}$. All the hypotheses
of the selector theorem are now satisfied. It gives indices $l_{1},\ldots,l_{K}$,
with $0\leq l_{j}<QK$, such that 
\[
\left(1-\varepsilon\right)\sum^{K}_{j=1}\left|c_{j}\right|^{2}\leq\left\Vert \sum^{K}_{j=1}c_{j}u_{j,l_{j}}\right\Vert ^{2}\leq\left(1+\varepsilon\right)\sum^{K}_{j=1}\left|c_{j}\right|^{2}
\]
for all scalars $c_{j}$.

Set $\theta_{j}=2\pi l_{j}/(QK)$. Returning to the formula for $\left\langle h,Sh\right\rangle $,
we obtain 
\[
\begin{aligned}\left\langle h,Sh\right\rangle  & =\sum^{QK-1}_{n=0}\left|\sum^{K}_{j=1}c_{j}a_{j}\left(n\right)e^{-in\theta_{j}}\right|^{2}\\
 & =Q\sum^{QK-1}_{n=0}\left|\sum^{K}_{j=1}c_{j}\frac{a_{j}\left(n\right)e^{-2\pi inl_{j}/\left(QK\right)}}{\sqrt{Q}}\right|^{2}\\
 & =Q\sum^{QK-1}_{n=0}\left|\sum^{K}_{j=1}c_{j}u_{j,l_{j}}\left(n\right)\right|^{2}\\
 & =Q\left\Vert \sum^{K}_{j=1}c_{j}u_{j,l_{j}}\right\Vert ^{2}.
\end{aligned}
\]
Consequently, 
\[
\begin{aligned}\left|\left\langle h,\left(S-QI\right)h\right\rangle \right| & =Q\left|\left\Vert \sum^{K}_{j=1}c_{j}u_{j,l_{j}}\right\Vert ^{2}-\sum^{K}_{j=1}\left|c_{j}\right|^{2}\right|\\
 & \leq\varepsilon Q\sum^{K}_{j=1}\left|c_{j}\right|^{2}=\varepsilon Q\left\Vert h\right\Vert ^{2}.
\end{aligned}
\]
Since $S-QI$ is self-adjoint, 
\[
\left\Vert S-QI\right\Vert =\sup_{\left\Vert h\right\Vert =1}\left|\left\langle h,\left(S-QI\right)h\right\rangle \right|\leq\varepsilon Q.
\]
This proves \prettyref{eq:4-2}. 
\end{proof}

\begin{rem}
A more recent selector theorem \cite[Corollary 4.8]{BownikSelector2026}
improves the group size requirement from $CB\varepsilon^{-4}$ to
$CB\varepsilon^{-2}$. So $Q$ may be chosen of order $\varepsilon^{-2}$. 
\end{rem}

By \prettyref{lem:2-1} and \prettyref{prop:4-1}, for each $K$ we
can choose the coordinates and angles so that the first $QK$ normalized
orbit vectors form a frame for $\mathbb{C}^{K}$ with bounds $Q\left(1-\varepsilon\right)$
and $Q\left(1+\varepsilon\right)$. Here $Q$ is independent of $K$
and of the prescribed radii. We next pass to the infinite orbit. 
\begin{thm}
\label{thm:4-3} Let $0<r_{j}<1$ and $r_{j}\to1$. The following
are equivalent. 
\begin{enumerate}
\item \label{enu:4-3a}There exist $\lambda_{j}\in\mathbb{C}$ with $\left|\lambda_{j}\right|=r_{j}$
and $g\in\ell^{2}$ such that the normal operator $T$ defined by
$Te_{j}=\lambda_{j}e_{j}$ has a normalized orbit 
\[
\left\{ \frac{T^{n}g}{\left\Vert T^{n}g\right\Vert }\right\} _{n\geq0}
\]
which is a frame for $\ell^{2}$.
\item \label{enu:4-3b}The series $\sum_{j\geq1}\left(1-r^{2}_{j}\right)$
diverges. 
\end{enumerate}
Moreover, assume 
\begin{equation}
\sum_{j\geq1}\left(1-r^{2}_{j}\right)=\infty.\label{eq:4-4a}
\end{equation}
Then, for every $\varepsilon\in\left(0,1\right)$, there exists $Q=Q\left(\varepsilon\right)\in\mathbb{Z}_{+}$,
for which $T$ and a unit vector $g$ can be chosen so that 
\begin{equation}
Q\left(1-\varepsilon\right)\left\Vert h\right\Vert ^{2}\leq\sum_{n\geq0}\left|\left\langle \frac{T^{n}g}{\left\Vert T^{n}g\right\Vert },h\right\rangle \right|^{2}\leq Q\left(1+\varepsilon\right)\left\Vert h\right\Vert ^{2},\quad h\in\ell^{2}.\label{eq:4-4}
\end{equation}
\end{thm}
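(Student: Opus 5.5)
The plan is to prove (1)$\Rightarrow$(2) by a counting identity like \prettyref{eq:3-7}, and the quantitative statement \prettyref{eq:4-4}, which gives (2)$\Rightarrow$(1), by a limiting argument combining \prettyref{prop:3-1} with \prettyref{prop:4-1}.

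\emph{Necessity.} Suppose the normalized orbit $x_{n}$ is a frame with upper bound $B$. Write $g=\sum g_{j}e_{j}$ and $m_{n}=\sum_{j}|g_{j}|^{2}r_{j}^{2n}$. Then $m_{n}>0$, and $m_{n}$ is nonincreasing. Dominated convergence gives $m_{n}\to0$, since every $r_{j}<1$. Testing the upper frame inequality on $h=e_{j}$ gives
\[
\sum_{n\geq0}\frac{|g_{j}|^{2}r_{j}^{2n}}{m_{n}}\leq B .
\]
Multiply by $1-r_{j}^{2}$, sum over $j$, and interchange sums exactly as in \prettyref{eq:3-7}. This gives
\[
B\sum_{j}(1-r_{j}^{2})\geq\sum_{n\geq0}\frac{m_{n}-m_{n+1}}{m_{n}}=\sum_{n\geq0}(1-t_{n}),\qquad t_{n}=m_{n+1}/m_{n}\in(0,1].
\]
The product $\prod_{n<N}t_{n}=m_{N}/m_{0}$ tends to $0$. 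Hence $\sum_{n}(1-t_{n})=\infty$: either $1-t_{n}\not\to0$, or else $-\log t_{n}\asymp1-t_{n}$ and $\sum_{n}(-\log t_{n})=\infty$. Therefore $\sum_{j}(1-r_{j}^{2})=\infty$.

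\emph{Sufficiency with bounds.} Assume \prettyref{eq:4-4a} and fix $\varepsilon$. Let $Q=Q(\varepsilon)$ be as in \prettyref{prop:4-1}. Rearrange the radii increasingly; this is possible since $r_{j}\to1$, and we undo the permutation at the end. Run the proof of \prettyref{prop:3-1}: along a subsequence $K$ we get $g_{j}^{(K)}\to g_{j}$ and $m_{n}^{(K)}\to m_{n}$, with $\sum_{j}g_{j}^{2}=1$. For each such $K$, \prettyref{prop:4-1} supplies angles $\theta_{j}^{(K)}$ for which $x_{0}^{(K)},\dots,x_{QK-1}^{(K)}$ satisfy \prettyref{eq:4-2} on $\mathbb{C}^{K}$. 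Since the circle is compact, a further diagonal subsequence gives $\theta_{j}^{(K)}\to\theta_{j}$ for every $j$. Set $Te_{j}=r_{j}e^{i\theta_{j}}e_{j}$.

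Now fix $h$ supported on $\{1,\dots,J\}$. For each fixed $n$, $\langle x_{n}^{(K)},h\rangle\to\langle x_{n},h\rangle$. The upper bound is then immediate: for each $M$, $\sum_{n<M}|\langle x_{n},h\rangle|^{2}\leq Q(1+\varepsilon)\|h\|^{2}$, and we let $M\to\infty$.

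\emph{Main obstacle: the lower bound.} Passing the lower bound to the limit needs uniform control of the tails $\sum_{n\geq M}|\langle x_{n}^{(K)},h\rangle|^{2}$. By Cauchy--Schwarz this tail is at most $\|h\|^{2}\sum_{j\leq J}\sum_{n\geq M}(g_{j}^{(K)})^{2}r_{j}^{2n}/m_{n}^{(K)}$. The domination estimate from the proof of \prettyref{prop:3-1} applies here. Choose $J'\geq J$ with $r_{J}<r_{J'+1}$; then for large $K$,
\[
\frac{(g_{j}^{(K)})^{2}r_{j}^{2n}}{m_{n}^{(K)}}\leq\frac{2}{b}\left(\frac{r_{j}}{r_{J'+1}}\right)^{2n},\qquad j\leq J,
\]
which is a geometric bound uniform in $K$. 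So the tail is below any $\delta\|h\|^{2}$ for $M$ large, uniformly in $K$. Letting $K\to\infty$ and then $\delta\to0$ gives $\sum_{n}|\langle x_{n},h\rangle|^{2}\geq Q(1-\varepsilon)\|h\|^{2}$.

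Both inequalities extend from finitely supported $h$ to all of $\ell^{2}$ by density, since the upper bound makes the analysis operator bounded. Undoing the permutation of indices completes the proof of \prettyref{eq:4-4}, and hence of (2)$\Rightarrow$(1).
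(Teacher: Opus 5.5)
Your proof is correct and follows the paper's route: the necessity argument is the paper's trace estimate for $D=I-T^{*}T$ written out on the diagonal, and the sufficiency argument combines \prettyref{prop:3-1}, \prettyref{prop:4-1} and a diagonal subsequence for the angles exactly as the paper does. The one technical variation is in passing the lower bound to the limit. You control the tails $\sum_{n\geq M}|\langle x^{(K)}_{n},h\rangle|^{2}$ uniformly in $K$ via the geometric domination from the proof of \prettyref{prop:3-1}, while the paper gets norm convergence of each column from weak convergence and $\|u^{(K)}_{j}\|=\|u_{j}\|=1$; this is the same mechanism, since that norm identity was itself derived from the same domination.
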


\begin{proof}
\eqref{enu:4-3b}$\Longrightarrow$\eqref{enu:4-3a}. Assume \prettyref{eq:4-4a}
holds. Fix $\varepsilon\in\left(0,1\right)$, and choose $Q$ as in
\prettyref{prop:4-1}.

For each $K$, take the finite coordinates $g^{\left(K\right)}_{j}$
used in the proof of \prettyref{prop:3-1}. Retain the subsequence
for which 
\[
g^{\left(K\right)}_{j}\longrightarrow g_{j},\quad m^{\left(K\right)}_{n}\longrightarrow m_{n}
\]
for every fixed $j,n$. The same proof gives positive limiting coordinates
with $\sum_{j}g^{2}_{j}=1$ and 
\[
\sum_{n\geq0}\frac{g^{2}_{j}r^{2n}_{j}}{m_{n}}=Q,\quad j\geq1.
\]

For each finite problem, set 
\[
a^{(K)}_{j}\left(n\right)=\frac{g^{\left(K\right)}_{j}r^{n}_{j}}{\sqrt{m^{\left(K\right)}_{n}}},\quad1\leq j\leq K,\quad0\leq n<QK.
\]
As in \prettyref{eq:4-1a}--\prettyref{eq:4-1}, these numbers satisfy
\[
\sum^{K}_{j=1}a^{(K)}_{j}\left(n\right)^{2}=1,\quad\sum^{QK-1}_{n=0}a^{(K)}_{j}\left(n\right)^{2}=Q.
\]
By \prettyref{prop:4-1}, there exist $\theta^{(K)}_{j}$ for which
the corresponding finite orbit vectors satisfy \prettyref{eq:4-2}.

Let 
\[
u^{\left(K\right)}_{j}\left(n\right)=\frac{g^{\left(K\right)}_{j}r^{n}_{j}}{\sqrt{Qm^{(K)}_{n}}}e^{-in\theta^{\left(K\right)}_{j}},\quad0\leq n<QK.
\]
These are the columns of the finite analysis matrix divided by $\sqrt{Q}$.
The frame inequalities in \prettyref{eq:4-2} therefore give 
\begin{equation}
\left(1-\varepsilon\right)\sum^{K}_{j=1}\left|c_{j}\right|^{2}\leq\left\Vert \sum^{K}_{j=1}c_{j}u^{\left(K\right)}_{j}\right\Vert ^{2}\leq\left(1+\varepsilon\right)\sum^{K}_{j=1}\left|c_{j}\right|^{2}.\label{eq:4-5}
\end{equation}

We regard each $u^{(K)}_{j}$ as a vector in $\ell^{2}\left(\mathbb{N}_{0}\right)$
by setting its coordinates equal to zero for $n\geq QK$. By \prettyref{eq:b-4},
\[
\left\Vert u^{(K)}_{j}\right\Vert ^{2}=\frac{(g^{(K)}_{j})^{2}}{Q}\sum^{QK-1}_{n=0}\frac{r^{2n}_{j}}{m^{\left(K\right)}_{n}}=1.
\]

The coordinates and moments converge along our chosen subsequence.
Passing to a further subsequence, we may assume that $e^{i\theta^{(K)}_{j}}\to e^{i\theta_{j}}$
for every fixed $j$. This follows by a diagonal selection, since
the unit circle is compact.

Define 
\[
u_{j}\left(n\right)=\frac{g_{j}r^{n}_{j}}{\sqrt{Qm_{n}}}e^{-in\theta_{j}},\quad n\geq0.
\]
For every fixed $j,n$, we have $u^{\left(K\right)}_{j}\left(n\right)\to u_{j}\left(n\right)$.
Also, \prettyref{prop:3-1} gives 
\begin{equation}
\left\Vert u_{j}\right\Vert ^{2}=\frac{g^{2}_{j}}{Q}\sum_{n\geq0}\frac{r^{2n}_{j}}{m_{n}}=1.\label{eq:4-6}
\end{equation}

We need to pass \prettyref{eq:4-5} to the limiting columns. Coordinatewise
convergence alone is not enough, since some of the norm could disappear
at late times. But both the finite columns and their limits have norm
one.

For each fixed $j$, coordinatewise convergence and the uniform norm
bound imply that $u^{\left(K\right)}_{j}$ converges weakly to $u_{j}$.
Indeed, convergence holds when tested against finitely supported vectors,
and then against every vector by approximation. Weak convergence and
equality of the norms give 
\[
\begin{aligned}\left\Vert u^{(K)}_{j}-u_{j}\right\Vert ^{2} & =\left\Vert u^{(K)}_{j}\right\Vert ^{2}+\left\Vert u_{j}\right\Vert ^{2}-2\Re\left\langle u^{(K)}_{j},u_{j}\right\rangle \\
 & \longrightarrow1+1-2=0.
\end{aligned}
\]
Thus every fixed column converges in norm.

Take finitely supported scalars $c_{j}$. For sufficiently large $K$,
all their indices lie among the first $K$ coordinates. We can pass
to the limit in \prettyref{eq:4-5}, since only finitely many columns
occur. This gives 
\begin{equation}
\left(1-\varepsilon\right)\sum_{j}\left|c_{j}\right|^{2}\leq\left\Vert \sum_{j}c_{j}u_{j}\right\Vert ^{2}\leq\left(1+\varepsilon\right)\sum_{j}\left|c_{j}\right|^{2}.\label{eq:4-7}
\end{equation}
The upper inequality shows that the map 
\[
\sum_{j}c_{j}e_{j}\longmapsto\sum_{j}c_{j}u_{j},
\]
initially defined on finitely supported sequences, extends to a bounded
operator on $\ell^{2}$. The lower inequality passes to every $c\in\ell^{2}$
by finite truncation.

Set 
\[
\lambda_{j}=r_{j}e^{i\theta_{j}},\quad Te_{j}=\lambda_{j}e_{j},\quad g=\sum_{j\geq1}g_{j}e_{j}.
\]
Then $T$ is normal, and $g$ has norm $1$. For each $n$, 
\[
T^{n}g=\sum_{j\geq1}g_{j}r^{n}_{j}e^{in\theta_{j}}e_{j},\quad\left\Vert T^{n}g\right\Vert ^{2}=\sum_{j\geq1}g^{2}_{j}r^{2n}_{j}=m_{n}>0.
\]

Let $h=\sum_{j}c_{j}e_{j}$ have finite support. Then, 
\[
\begin{aligned}\left\langle \frac{T^{n}g}{\left\Vert T^{n}g\right\Vert },h\right\rangle  & =\sum_{j}c_{j}\frac{g_{j}r^{n}_{j}}{\sqrt{m_{n}}}e^{-in\theta_{j}}=\sqrt{Q}\sum_{j}c_{j}u_{j}\left(n\right).\end{aligned}
\]
Now \prettyref{eq:4-7} gives 
\[
Q\left(1-\varepsilon\right)\left\Vert h\right\Vert ^{2}\leq\sum_{n\geq0}\left|\left\langle \frac{T^{n}g}{\left\Vert T^{n}g\right\Vert },h\right\rangle \right|^{2}\leq Q\left(1+\varepsilon\right)\left\Vert h\right\Vert ^{2}.
\]
The upper estimate allows this inequality to extend from finitely
supported vectors to every $h\in\ell^{2}$. Undoing the preliminary
reordering gives an operator with the radii in their original order.

\eqref{enu:4-3a}$\Longrightarrow$\eqref{enu:4-3b}. Suppose the
normalized orbit 
\[
x_{n}=\frac{T^{n}g}{\left\Vert T^{n}g\right\Vert },\quad n\geq0,
\]
is a frame with upper bound $B$. We will only use the upper frame
inequality.

Suppose instead that 
\[
\sum_{j\geq1}\left(1-r^{2}_{j}\right)<\infty.
\]
We have $\left\Vert T\right\Vert =\sup\left|\lambda_{j}\right|=1$,
so 
\[
D=I-T^{*}T\geq0,
\]
and 
\[
tr\left(D\right)=\sum\left\langle e_{j},De_{j}\right\rangle =\sum_{j\geq1}(1-r^{2}_{j})<\infty.
\]

Moreover, 
\begin{equation}
\left\langle x_{n},Dx_{n}\right\rangle =1-\frac{\left\Vert T^{n+1}g\right\Vert ^{2}}{\left\Vert T^{n}g\right\Vert ^{2}}.\label{eq:4-8}
\end{equation}

We first show that the sum of these quantities is finite. Indeed,
\[
\begin{aligned}\sum_{n\geq0}\left\langle x_{n},Dx_{n}\right\rangle  & =\sum_{n\geq0}\left\Vert D^{1/2}x_{n}\right\Vert ^{2}\\
 & =\sum_{n\geq0}\sum_{j\geq1}\left|\left\langle x_{n},D^{1/2}e_{j}\right\rangle \right|^{2}\\
 & =\sum_{j\geq1}\sum_{n\geq0}\left|\left\langle x_{n},D^{1/2}e_{j}\right\rangle \right|^{2}\\
 & \leq B\sum_{j\geq1}\left\Vert D^{1/2}e_{j}\right\Vert ^{2}\\
 & =B\cdot tr\left(D\right)<\infty.
\end{aligned}
\]
The inequality is the upper frame bound applied to $D^{1/2}e_{j}$.

On the other hand, \prettyref{eq:4-8} shows that this sum cannot
be finite. To see this, write 
\[
a_{n}=\left\langle x_{n},Dx_{n}\right\rangle .
\]
We have $0\leq a_{n}<1$. By the preceding estimate, 
\[
\sum_{n\geq0}a_{n}<\infty.
\]
Choose $N$ so large that 
\[
\sum_{n\geq N}a_{n}<\frac{1}{2}.
\]

For $L>N$, \prettyref{eq:4-8} gives 
\begin{equation}
\begin{aligned}\frac{\left\Vert T^{L}g\right\Vert ^{2}}{\left\Vert T^{N}g\right\Vert ^{2}} & =\prod^{L-1}_{n=N}\left(1-a_{n}\right)\geq1-\sum^{L-1}_{n=N}a_{n}>\frac{1}{2}.\end{aligned}
\label{eq:4-10}
\end{equation}
Note that 
\[
\left\Vert T^{n}g\right\Vert ^{2}=\sum_{j\geq1}\left|g_{j}\right|^{2}r^{2n}_{j}\longrightarrow0.
\]
So $\left\Vert T^{L}g\right\Vert \to0$, and $\left\Vert T^{N}g\right\Vert >0$.
The lhs of \eqref{eq:4-10} therefore tends to zero, a contradiction.
Hence \eqref{eq:4-4a} holds.
\end{proof}

\bibliographystyle{amsalpha}
\bibliography{ref}

\end{document}